\begin{document}

\newtheorem{theorem}{Theorem}[section]
\newtheorem{lemma}[theorem]{Lemma}
\newtheorem{proposition}[theorem]{Proposition}
\newtheorem{corollary}[theorem]{Corollary}
\newtheorem{conjecture}[theorem]{Conjecture}
\newtheorem{example}{Example}

\newtheorem{definition}[theorem]{Definition}
\newtheorem{remark}[theorem]{Remark}
\newtheorem{notation}[theorem]{Notation}
\newtheorem{question}[theorem]{Question}

\numberwithin{equation}{section}

\def\s{{\bf s}} 
\def\t{{\bf t}} 
\def\u{{\bf u}} 
\def\x{{\bf x}} 
\def\y{{\bf y}} 
\def\z{{\bf z}} 
\def\B{{\bf B}} 
\def\C{{\bf C}} 
\def\D{{\bf D}}
\def\K{{\bf K}}
\def\F{{\bf F}}
\def\M{{\bf M}}
\def\ML{{\bf ML}}
\def\Nn{{\bf N}}
\def\G{{\bf \Gamma}} 
\def\W{{\bf W}}
\def\X{{\bf X}}
\def\U{{\bf U}}
\def\V{{\bf V}}
\def\Un{{\bf 1}}
\def\Y{{\bf Y}}
\def\Z{{\bf Z}}
\def\P{{\bf P}}
\def\Q{{\bf Q}}
\def\S{{\bf S}}
\def\L{{\bf L}}
\def\T{{\bf T}}

\def\cB{{\mathcal{B}}} 
\def\cC{{\mathcal{C}}} 
\def\cD{{\mathcal{D}}} 
\def\cG{{\mathcal{G}}} 
\def\cK{{\mathcal{K}}} 
\def\cL{{\mathcal{L}}} 
\def\cR{{\mathcal{R}}} 
\def\cS{{\mathcal{S}}}
\def\cU{{\mathcal{U}}}
\def\cV{{\mathcal{V}}} 
\def\cX{{\mathcal X}}
\def\cY{{\mathcal Y}}
\def\cZ{{\mathcal Z}}

\def\Ea{E_\a}
\def\eps{{\varepsilon}} 
\def\esp{{\mathbb{E}}} 
\def\Ga{{\Gamma}}

\def\lacc{\left\{}
\def\lcr{\left[}
\def\lpa{\left(}
\def\lva{\left|}
\def\racc{\right\}}
\def\rpa{\right)}
\def\rcr{\right]}
\def\rva{\right|}

\def\prst{{\leq_{st}}}
\def\prost{{\prec_{st}}}
\def\prcvx{{\prec_{cx}}}
\def\Rr{{\bf R}}

\def\CC{{\mathbb{C}}}
\def\EE{{\mathbb{E}}}
\def\NN{{\mathbb{N}}} 
\def\QQ{{\mathbb{Q}}} 
\def\PP{{\mathbb{P}}}
\def\ZZ{{\mathbb{Z}}}
\def\RR{{\mathbb{R}}}

\def\Tt{{\bf \Theta}}
\def\Ttt{{\tilde \Tt}}

\def\a{\alpha}
\def\A{{\bf A}}
\def\AA{{\mathcal A}}
\def\hAA{{\hat \AA}}
\def\hL{{\hat L}}
\def\hT{{\hat T}}

\def\claw{\stackrel{(d)}{\longrightarrow}}
\def\elaw{\stackrel{(d)}{=}}
\def\pslaw{\stackrel{a.s.}{\longrightarrow}}
\def\qed{\hfill$\square$}

\newcommand*\pFqskip{8mu}
\catcode`,\active
\newcommand*\pFq{\begingroup
        \catcode`\,\active
        \def ,{\mskip\pFqskip\relax}%
        \dopFq
}
\catcode`\,12
\def\dopFq#1#2#3#4#5{%
        {}_{#1}F_{#2}\biggl[\genfrac..{0pt}{}{#3}{#4};#5\biggr]%
        \endgroup
}

\def\ii{{\rm i}}

\title[ID of alpha Cauchy]{Infinite divisibility of $\alpha$-Cauchy distributions}

\author[M.~Wang]{Min Wang}

\address{School of Mathematics and Statistics, Wuhan University of Technology,  Wuhan, 430063, China}

\email{minwangmath@whut.edu.cn}

\keywords{Infinite divisibility; $\alpha$-Cauchy distribution; Mittag-Leffler function} 

\subjclass[2020]{60E07, 60E05, 60E10, 33E12}

\begin{abstract} 
   In 2009, Yano, Yano and Yor proposed the question of studying the infinite divisibility of  the $\alpha$-Cauchy variable $\mathcal{C}_\alpha$ for $\alpha > 1$. The particular case $\mathcal{C}_2$ is the well-known standard Cauchy variable, which is infinitely divisible and indeed stable. For $\alpha \neq 2$, the infinite divisibility of $\mathcal{C}_\alpha$ is previously unknown. In this paper, we prove that $\mathcal{C}_\alpha$ is infinitely divisible if and only if $1 < \alpha \leq 2$. 
\end{abstract}

\maketitle

\section{Introduction}
A random variable is infinitely divisible if, for any positive integer $n$, it is the sum of $n$ independent and identically distributed random variables. Typical examples of infinitely divisible distributions include the normal, Cauchy, half Cauchy, Stable, Gamma, Poisson, and Student-$t$. In 1929, de Finetti first introduced the concept of infinite divisibility, and then Kolmogorov, Lévy, and Khintchine further developed the theory. These distributions are closely related to limit theorems and the theory of Lévy processes and have important applications in finance, insurance, biology, physics, and signal processing. 

The study of infinite divisibility of random variables is a long-standing research topic.
Steutel \cite{Ste70, Ste73}, Hudson-Tucker \cite{HT75}, Kristiansen \cite{Kri94}, Sato \cite{Sat99}, Steutel-Van Harn \cite{SH04}, and others have provided a series of criteria for determining infinite divisibility from aspects such as density, characteristic function, Laplace transform, tail behaviour and decomposition. 
In some cases, it is more convenient to consider subclasses of infinitely divisible distributions, like self-decomposable distributions, generalized gamma convolutions and hyperbolically completely monotone distributions. We refer to Steutel and Van Harn \cite{SH04}, Sato \cite{Sat99} and Bondesson \cite{Bon92} for abundant properties of these subclasses. 
Proving or disproving infinite divisibility of a certain distribution can sometimes be quite sophisticated. For example, the infinite divisibility of the Student $t$-distribution \cite{Gro76a, Ism77}, the Pareto distribution \cite{Tho77a}, the lognormal distribution \cite{Tho77b}, the inverse Beta distributions \cite{BS15}, and others \cite{JS13, BLM20}.

In 2009, Yano, Yano and Yor \cite{YYY09} introduced the $\alpha$-Cauchy variable to study the first hitting times of points for one-dimensional symmetric stable L\'evy processes.  
The density of $\alpha$-Cauchy variable, denoted by $\mathcal{C}_\a$, is 
\begin{equation}\label{eq density alpha Cauchy}
    f_{\mathcal{C}_\a}(x) = \frac{\sin(\pi/\a)}{2\pi/\a}\frac{1}{1+|x|^\a}, \quad \alpha > 1, \,\, x \in \mathbb{R}. 
\end{equation}
In particular, $\mathcal{C}_2$ is 
the standard Cauchy random variable, which is the ratio of two independent normal random variables. It is a continuous distribution describing resonance behavior. 
Cauchy distribution and $\alpha$-Cauchy distribution have been used to calculate some special values of the Riemann zeta function and Hurwitz zeta function, respectively; see Bourgade-Fujita-Yor \cite{BFY07} and Fujita-Yano \cite{FY12}.

Yano, Yano and Yor \cite[Remark 2.9]{YYY09} proposed questions of studying the infinite divisibility and self-decomposability of  $\mathcal{C}_\alpha$, $|\mathcal{C}_\alpha|$, and $|\mathcal{C}_\alpha|^{-p}$ for $ p>0$ and $\alpha > 1$. 
In this paper, we proceed with the first step: to study the infinite divisibility of $\mathcal{C}_\a$. Other questions will be left for future research. 

It is well-known that the Cauchy variable $\mathcal{C}_2$ is infinitely divisible and indeed stable. For $\alpha \neq 2$, the infinite divisibility of $\mathcal{C}_\alpha$ is previously unknown. In the following theorem, we provide the complete answer.

\begin{theorem}\label{Theorem ID of alpha Cauchy}
The $\alpha$-Cauchy variable $\mathcal{C}_\a$ is infinitely divisible if and only if  $1 < \a \leq 2$. 
\end{theorem}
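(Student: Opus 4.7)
The density $f_{\mathcal{C}_\alpha}$ is symmetric and, writing $u=t^2$, has the form $g_\alpha(u) = c_\alpha(1+u^{\alpha/2})^{-1}$. For $1<\alpha\le 2$ the map $u\mapsto u^{\alpha/2}$ is a Bernstein function and $x\mapsto (1+x)^{-1}$ is completely monotone, so $g_\alpha$ is completely monotone on $(0,\infty)$. By Kelker's theorem this forces $\mathcal{C}_\alpha$ to be a variance mixture of centered normals: there exists an independent pair $(V_\alpha,N)$ with $V_\alpha\ge 0$ and $N$ standard normal such that $\mathcal{C}_\alpha\stackrel{d}{=}\sqrt{V_\alpha}\,N$. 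My plan is then to invoke the classical Thorin--Bondesson principle: whenever the mixing variable $V_\alpha$ is a generalized gamma convolution (GGC), the resulting normal variance mixture $\sqrt{V_\alpha}\,N$ is infinitely divisible. The problem thus reduces to proving that $V_\alpha$ is GGC when $1<\alpha\le 6/5$.

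To identify $V_\alpha$, the beta/gamma calculus is convenient. The density of $|\mathcal{C}_\alpha|^\alpha$ is that of a beta-prime law, so $|\mathcal{C}_\alpha|^\alpha \stackrel{d}{=} \Gamma_{1/\alpha}/\Gamma_{1-1/\alpha}$ with independent gamma factors. Combining this with $N^2 \stackrel{d}{=} 2\Gamma_{1/2}$ yields, on an appropriate vertical strip,
\begin{equation*}
    \EE[V_\alpha^{s}] \;=\; \frac{\sqrt{\pi}}{2^{s}\,\Gamma(1/\alpha)\,\Gamma(1-1/\alpha)}\cdot\frac{\Gamma\!\lpa (1+2s)/\alpha \rpa\,\Gamma\!\lpa 1-(1+2s)/\alpha \rpa}{\Gamma(1/2+s)}.
\end{equation*}
This places $V_\alpha$ precisely inside the class of positive variables with moments of Gamma type studied elsewhere in the paper, so the general criteria for GGC-ness developed there can be brought to bear.

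The crux is to verify $V_\alpha$ is GGC for $1<\alpha\le 6/5$. The natural attack is to decompose $V_\alpha$ as an independent product/quotient of explicit gamma (and possibly Mittag-Leffler) factors together with a residual factor whose density we can control, and then either (i) show the residual density is hyperbolically completely monotone (which by Bondesson implies GGC), or (ii) exhibit the Thorin representation $\EE[e^{-\lambda V_\alpha}]=\exp\bigl(-\int_0^\infty\log(1+\lambda/u)\,\nu(du)\bigr)$ with a \emph{nonnegative} Thorin measure $\nu$. The threshold $\alpha=6/5$ should emerge as the sharpest value up to which this positivity, read off from the ratio of Gamma functions above, is preserved.

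The principal obstacle is this last GGC check. For $\alpha=2$ the representation collapses (one gets $\phi_{\mathcal{C}_2}(\xi)=e^{-|\xi|}$), but for general $\alpha$ showing nonnegativity of the Thorin density requires fine monotonicity and asymptotic estimates on quotients of Gamma functions, typically via Malmst\'en/Binet type integral representations together with the paper's toolbox for moments of Gamma type. The constant $6/5$ marks exactly where these positivity estimates first become tight, and breaking past it within this framework would require a genuinely new idea.
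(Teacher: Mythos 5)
Your setup is sound: the complete monotonicity of $u\mapsto c_\alpha(1+u^{\alpha/2})^{-1}$ for $1<\alpha\le 2$ and Kelker's theorem do give $\mathcal{C}_\alpha\elaw \sqrt{V_\alpha}\,N$, your Mellin transform for $V_\alpha$ checks out against \eqref{eq Mellin alpha Cauchy} and the reflection formula, and GGC (indeed mere infinite divisibility) of $V_\alpha$ would suffice, since a normal variance mixture with ID mixing law is ID by Bochner subordination. But this is where the proposal stops being a proof: the step you yourself call ``the crux'' --- verifying that $V_\alpha$ is GGC (or ID) for $1<\alpha\le 6/5$ --- is never carried out. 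You list two possible attacks (HCM of a residual density, or positivity of a Thorin measure) without executing either, and the assertion that ``the constant $6/5$ marks exactly where these positivity estimates first become tight'' is unsupported speculation. Nothing in your framework produces the number $6/5$; note that at $\alpha=2$ the mixing law is the one-sided $1/2$-stable law, which \emph{is} GGC, so it is not even clear that GGC of $V_\alpha$ fails anywhere on $(1,2]$ --- your method's breakdown point, if any, is simply unknown. A proof must derive the threshold, not posit it.

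For comparison, the paper takes a different mixture entirely: by Proposition \ref{Proposition Steutel coro VI.4.8} it suffices to factor $\mathcal{C}_\alpha \elaw V_\alpha \times Y$ with $V_\alpha$ symmetric and $Y$ having the fixed density $\frac{1}{2}(1+x)e^{-x}$, which replaces your hard GGC verification by a pure \emph{existence} question for the mixing factor, identified via \eqref{def Xabcd} as $|V_\alpha|\elaw \X_{1+1/\alpha,\,1-1/\alpha,\,3,\,\alpha}^{1/\alpha}$. By \eqref{fact equiv} existence is equivalent to non-negativity of the three-parametric Mittag-Leffler function $E_{\alpha,2+\alpha}^{2}(-t)$, and the Laplace-transform identity \eqref{LT of 3ML} plus the stability of complete monotonicity under products reduces this to non-negativity of the two-parametric $E_{\alpha,1+\alpha/2}(-t)$; Theorem \ref{Theorem admissible parameter domain} grants this when $1+\alpha/2\ge 4\alpha/3$, i.e.\ exactly $\alpha\le 6/5$. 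That is the concrete computation producing $6/5$, and your proposal contains no analogue of it. In your scheme, by contrast, even the existence of $V_\alpha$ is free (Kelker gives it for all $1<\alpha\le 2$), so the entire difficulty is concentrated in the ID/GGC check you left open; as written, the proposal is a plausible research program, not a proof.
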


We will prove the sufficient part of this theorem using three distinct methods. In doing so, we will uncover several key properties of $\mathcal{C}_\alpha$, including novel decompositions and new expressions for its density and characteristic function. To prove necessity, we will show that the characteristic function of $\mathcal{C}_\alpha$ can take negative values when $\alpha > 2$, which is impossible for an infinitely divisible random variable.

Let $X_\alpha = (X_\alpha(t): t\geq 0)$ be the symmetric stable L\'evy process of index $\alpha$ starting from zero. 
Let $T_{\{a\}}(X_\alpha)$ denote the first hitting time of point $a$ for $X_\alpha$.
The infinite divisibility of  $T_{\{a\}}(X_\alpha)$ implies that of $\mathcal{C}_\a$, see formula (5.12) in \cite{YYY09}. 
In 2019, Letemplier and Simon \cite{LS19} further studied the first hitting time of zero $\tau(\alpha, \rho)$ for a real strictly $\alpha$-stable process (can be asymmetric) starting from one. They conjectured that $\tau(\alpha, \rho)$ is infinitely divisible for $\alpha \in (1,2]$ and $\rho \in [0,1]$. Note that $\tau(\alpha, \frac{1}{2})$ coincides with $T_{\{-1\}}(X_\alpha)$. Our Theorem \ref{Theorem ID of alpha Cauchy} supports Letemplier and Simon's conjecture.

\textit{Notation.} Throughout, in any factorization of the type $X \elaw Y \times Z$, 
the random variables $Y,Z$ on the right-hand side will be assumed to be independent. We denote the Gamma random variable by $\G_{c}$, whose density is
$ \frac{1}{\Gamma(c)}x^{c-1}e^{-x}\mathbf{1}_{(0, \infty)}(x). $

\section{Proof of the sufficient part of Theorem \ref{Theorem ID of alpha Cauchy} }
\label{section sufficient}
We give three proofs from three aspects: decomposition, density, and characteristic function.
These proofs are based on three key properties of $\mathcal{C}_\alpha$, see Lemmas \ref{lemma factorization alpha cauchy}, \ref{lemma density as LT of a CM} and \ref{lemma density of Linnik}. 
\subsection{From the perspective of decomposition:} 
By Bochner's subordination, see \cite{Boc49} or \cite[Theorem 30.1]{Sat99}, one knows that a variance mixture of normal distribution functions is infinitely divisible if the mixing distribution function is infinitely divisible, i.e. for $Z \, \sim \, N(0,1)$, $Z \times \sqrt{T}$ is infinitely divisible if T is infinitely divisible. There are currently no known necessary and sufficient conditions for $T$ such that  $Z \times \sqrt{T}$ is infinitely divisible. 
In 1971, Kelker \cite{Kel71} has shown that even if T is not infinitely divisible, $Z \times \sqrt{T}$ can still be infinitely divisible. His result can be reformulated as follows.

\begin{lemma}
    \label{Kelker Theorem 5}
    Let $Z$ be a standard normal random variable and let $W$ be a nonnegative random variable. Then the random variable $Z \times \sqrt{\G_{1/2}^{-1} \times W }$ is infinitely divisible. 
\end{lemma}

\begin{proof}[Proof of Lemma \ref{Kelker Theorem 5}]
Theorem 5 in \cite{Kel71} stated that \textit{all scale parameter mixtures of Cauchy distributions are infinitely divisible}. A mixture of Cauchy distributions is also a mixture of zero-mean normal distributions, because Cauchy variable is proportional to $Z \times \sqrt{\G_{1/2}^{-1}}$. Therefore, $Z \times \sqrt{\G_{1/2}^{-1} \times W }$ is infinitely divisible for any nonnegative random variable $W$. 
\end{proof}

The fractional moments of the half $\alpha$-Cauchy variable $ |\mathcal{C}_\a|$ are of the form
\begin{equation}
\label{eq momemts of alpha cauchy}
    \mathbb{E}\left[|\mathcal{C}_\a|^s\right] =  \frac{\sin(\pi/\a)}{\pi} \Gamma\left(\frac{1}{\a} +\frac{s}{\a}\right)\Gamma\left(1 - \frac{1}{\a} - \frac{s}{\a}\right), \quad -1 < s < \a-1.
\end{equation}
We want to prove that $\mathcal{C}_\a$ has a factorization in the form of Lemma \ref{Kelker Theorem 5} by comparing fractional moments of both sides.  

\begin{lemma}
    \label{lemma factorization alpha cauchy}
    If $1<\alpha \leq 2$, then the $\alpha$-Cauchy variable is proportional to 
    $$ Z \times \sqrt{\G_{1/2}^{-1} \times T_\a},$$
    where $T_\alpha$ is a positive random variable that satisfies
    \begin{equation}
  \label{eq mellin T alpha}
    \mathbb{E}[T_{\a}^s] = \frac{\Gamma(\frac{1}{2})\Gamma(\frac{1}{2})}{\Gamma(\frac{1}{\alpha})\Gamma(1-\frac{1}{\alpha})} \frac{\Gamma(\frac{1}{\a}+\frac{2s}{\a})\Gamma(1-\frac{1}{\a}-\frac{2s}{\a})}{\Gamma(\frac{1}{2}+s)\Gamma(\frac{1}{2}-s)}, \quad -\frac{1}{2} < s < \frac{\alpha-1}{2}.
\end{equation}
\end{lemma}

\begin{proof}[Proof of Lemma \ref{lemma factorization alpha cauchy}]
    Ferreira and Simon \cite{FS23} proved, for real $a$ and $b$, the random variable $\M_{a,b}$ with fractional moments 
\begin{equation}
\label{eq mellin M alpha beta}
    \mathbb{E}[\M_{a,b}^s] = \Gamma(a+b)\frac{\Gamma(1+s)}{\Gamma(a+b+a s)}, \quad s > -1,
\end{equation}
exists if and only if $a \in [0,1]$ and $ b \geq 0$. Note that $\M_{1,b}$ is the Beta random variable. For $a \in [0, 1)$ and $ b \geq 0$, the density of $\M_{a,b}$ is $\Gamma(a +b) \phi(-a, b, -x)$, where 
$$ \phi(a, b, z) = \sum_{n\geq 0} \frac{z^n}{n! \Gamma(b+a n) }, \quad b, z \in \mathbb{C}, \,\, a > -1, $$
is the Wright function. 
When $t > -1, a \in [0,1)$ and $ b \geq 0$, the random variable with density $\frac{\Gamma(a(1+t)+b)}{\Gamma(1+t)} x^{t} \phi(-a, b, -x)$ exists, and we denote it by $\M_{a,b, t}$. By direct computations, we have
\begin{equation}
\label{eq mellin M alpha beta t}
    \mathbb{E}[\M_{a,b, t}^s] = \frac{\Gamma(a(1+t)+b)}{\Gamma(1+t)}\frac{\Gamma(1+t+s)}{\Gamma(a(1+t)+b+a s)}, \quad s > -1-t. 
\end{equation}
By \eqref{eq momemts of alpha cauchy}, \eqref{eq mellin T alpha}, \eqref{eq mellin M alpha beta t} and comparing fractional moments, we can check that the desired $T_\alpha$ can be chosen as 
 $\M_{\frac{\alpha}{2},0, \frac{1}{\alpha} -1}^{2/\alpha} \times \M_{\frac{\alpha}{2},1-\frac{\alpha}{2}, -\frac{1}{\alpha} }^{-2/\alpha} $. 
\end{proof}

\begin{proof}[First proof of the sufficient part of Theorem \ref{Theorem ID of alpha Cauchy}]
    Combining Lemmas \ref{Kelker Theorem 5} and \ref{lemma factorization alpha cauchy}, the  $\alpha$-Cauchy variable is infinitely divisible if $1<\alpha \leq 2$. 
\end{proof}

\subsection{From the perspective of density:} 
Recall that a non-negative function $f$ on $(0, \infty)$ is called completely
monotone if it has derivatives of all orders and
$ (-1)^n f^{(n)}(x) \geq 0$
 for $n \geq 1$
and $x > 0$, see Schilling-Song-Vondraček \cite{SSV12}.  
Kelker \cite{Kel71} also found an interesting duality between density and characteristic function. 
\begin{theorem}\label{Theorem6 in kelker1971}{\cite[Theorem 6]{Kel71}}
Let $h: (-\infty, \infty) \to (0, \infty)$ and $g: (0,\infty) \to (0, \infty)$ be two functions that satisfy $h(0) = 1$, 
$h(t) = \int_0^\infty e^{-t^2 u}g(u)du$ and $\int_{-\infty}^{\infty}h(t)dt = K < \infty$. If $g$ or $u^{-3/2}g(1/u)$ is completely monotone, then $h(t)$ is an infinitely divisible characteristic function and $K^{-1}h(x)$ is an infinitely divisible density. 
\end{theorem}
This theorem gives us a way to verify infinite divisibility from the perspective of density. Generally, it is difficult to prove the complete monotonicity of the kernel function. In our case, we will take advantage of the three-parametric Mittag-Leffler function to overcome this difficulty.

The classical Mittag-Leffler function is the entire function 
\begin{equation*}
    E_{\rho}(z) := \sum_{n\geq 0} \frac{z^n}{\Gamma(1+\rho n)}, \quad z \in \mathbb{C},\, \rho >0.
\end{equation*}
It was introduced by Gosta Mittag-Leffler in 1903. Two years later, Wiman introduced the two-parametric Mittag-Leffler function
\begin{equation*}
    E_{\rho, \mu}(z) := \sum_{n\geq 0} \frac{z^n}{\Gamma(\mu +\rho n)}, \quad z \in \mathbb{C},\, \rho, \mu >0.
\end{equation*}
In 1971, Prabhakar studied the three-parametric Mittag-Leffler function
\begin{equation*}
    E_{\rho, \mu}^\gamma (z) := \sum_{n\geq 0}   \frac{\Gamma(\gamma + n)}{\Gamma(\gamma)\Gamma(1 + n)\Gamma(\mu +\rho n)} z^n, \quad z \in \mathbb{C},
\end{equation*}
where $\rho >0$, $\mu > 0$ and $\gamma > 0$.
With the help of these functions, we can express $f_{\mathcal{C}_\a}(\sqrt{|x|})$ as the Laplace transform of a completely monotone function when $1< \alpha \leq 2$.

\begin{lemma}\label{lemma density as LT of a CM}
   For any $\alpha > 1$, the density of $\mathcal{C}_\a$ can be transformed into
\begin{equation}
\label{transformed density}
   f_{\mathcal{C}_\alpha}(x) = \frac{\sin(\pi/\a)}{2\pi/\a} \int_0^{\infty} e^{-x^2 t}  t^{\alpha/2-1} E_{\alpha/2, \alpha/2}(-t^{\alpha/2})dt, \quad \alpha > 1.
\end{equation}
Moreover, if $1< \alpha \leq 2$, then the function $ t^{\alpha/2-1} E_{\alpha/2, \alpha/2}(-t^{\alpha/2})$ is completely monotone. 
\end{lemma}

\begin{proof}[Proof of Lemma \ref{lemma density as LT of a CM}]
For $s > 1$ and $\rho > 0$, by direct computation, 
\begin{align*}
    & \int_0^\infty e^{-st} t^{\rho-1}E_{\rho,\rho}(-t^\rho)dt \\
    = & \sum_{n\geq 0} \frac{(-1)^n}{\Gamma(\rho + \rho n )}\int_0^\infty e^{-st} t^{\rho-1+n \rho }dt\\
     =&  \sum_{n\geq 0} \frac{(-1)^n}{s^{\rho + n\rho}}  = \frac{1}{1+s^\rho}.
\end{align*}
By analytic continuation, the identity 
\begin{align*}
     \int_0^\infty e^{-st} t^{\rho-1}E_{\rho,\rho}(-t^\rho)dt   = \frac{1}{1+s^\rho}
\end{align*}
holds true for $s > 0$ and $\rho > 0$. 
 Choosing $s = x^2, $ and $ \rho = \alpha/2$, we have 
\begin{equation}
    \frac{1}{1+|x|^\alpha} = \frac{1}{1+(x^2)^{\alpha/2}} =  \int_0^{\infty} e^{-x^2 t}  t^{\alpha/2-1} E_{\alpha/2, \alpha/2}(-t^{\alpha/2})dt, \quad \alpha > 1.
\end{equation}
If $1< \alpha \leq 2$, then $t^{\alpha/2-1} E_{\alpha/2, \alpha/2}(-t^{\alpha/2})$ is completely monotone, because
\begin{equation*}
     t^{\mu-1} E^\gamma_{\rho, \mu}(-t^\rho) \; \text{is completely monotone iff \, $0<\rho, \mu \leq 1$ \, and \, $0 < \gamma \leq\mu/\rho$, }
\end{equation*}
see, e.g. \cite[formula 5.1.10]{GKMR20} or more recently \cite{GHLP21}.
\end{proof}

\begin{remark}
The above lemma proves that 
the function $x \mapsto f_{\mathcal{C}_\a}(\sqrt{|x|})$ is the Laplace transform of a completely monotone function when $1< \alpha \leq 2$. In other words, it is a Stieltjes function. 
This conclusion can also be proved by utilizing the relationship between Stieltjes functions and complete Bernstein functions: a function $f \not\equiv 0 $ is a complete Bernstein function, if and only if, $1/f \not\equiv 0$ is a Stieltjes function, see the monograph of Schilling, Song and Vondraček \cite[Theorem 7.3]{SSV12}. For every $\beta \in (0,1)$, the function $x \mapsto 1 + x^\beta$ is a complete Bernstein function, because
    \begin{equation*}
      1 + x^\beta = 1 + \frac{\beta}{\Gamma(1-\beta)}\int_0^\infty(1-e^{-xt})\frac{1}{t^{\beta+1}}dt, \quad \beta \in (0,1). 
    \end{equation*}
This proof is simple, but Lemma \ref{lemma density as LT of a CM} gives an explicit expression that may have independent interest. 
\end{remark}

\begin{proof}[Second proof of the sufficient part of Theorem \ref{Theorem ID of alpha Cauchy}]
    Combining Theorem \ref{Theorem6 in kelker1971} and Lemma \ref{lemma density as LT of a CM}, $\mathcal{C}_\a$ is infinitely divisible if  $1 < \a \leq 2$. 
\end{proof}

\subsection{From the perspective of characteristic function:}
The criterion we will use is the following theorem.
\begin{theorem}{\cite[Theorem IV.10.5]{SH04}}\label{Steutel 10.5}
    If $\pi$ is a Laplace transform of a density function, then the function $\phi$ defined by $\phi(u) = \pi(|u|)$ for $u \in \mathbb{R}$, is an infinitely divisible characteristic function. 
\end{theorem}
Recall that for $0 < \alpha \leq 2$, the $\alpha$-Linnik variable $\Lambda_\alpha$ is defined by its characteristic function:
\begin{equation*}
    \mathbb{E}[e^{i\theta \Lambda_\alpha}] = \frac{1}{1+|\theta|^\alpha}, \; \theta \in \mathbb{R}.
\end{equation*}
Devroye \cite{Dev90} proved the following identity in law 
$$   \Lambda_\alpha  \elaw X_\alpha(\G_1), $$
where $X_\alpha = (X_\alpha(t): t\geq 0)$ is the symmetric stable L\'evy process of index $\alpha$ starting from zero as before and $\G_1$ is a standard exponential variable independent of $X_\alpha$. 

By \cite[Proposition 2.11]{YYY09}, for $1<\alpha<2$, the $\alpha$-Cauchy distribution and the $\alpha$-Linnik distribution satisfy the following relation:
``\textit{the characteristic function of any of these two distributions is proportional to the density of the other}".

We then focus on the density of $\alpha$-Linnik variable in order to express the characteristic function of $\alpha$-Cauchy variable. 
\begin{lemma}
    \label{lemma density of Linnik}
    For $1<\alpha <2$, the density of $\alpha$-Linnik variable, as well as the characteristic function of $\alpha$-Cauchy variable, is proportional to
$$  \int_0^\infty  e^{-|x|y}  \frac{ y^\alpha}{  y^{2\alpha} + 2\cos(\pi \alpha/2)y^\alpha + 1}dy. $$
\end{lemma}

\begin{proof}[Proof of Lemma \ref{lemma density of Linnik}]

Zolotarev \cite[Theorem 2.6.3]{Zol86} proved that the fractional moment of $|X_\alpha(1)|$ is
\begin{equation*}\label{eq Mellin Z}
 \mathbb{E}[ |X_{\a}(1)|^s] = \frac{ \Gamma(1+s)\Gamma(1-s/\a)}{\Gamma(1+s/2)\Gamma(1-s/2)}   , \quad -1 < s < \a. 
\end{equation*}
By self-similarity, $   \Lambda_\alpha  \elaw X_\alpha(1) \times \G_1^{1/\alpha}$, thus the fractional moment of $|\Lambda_\alpha|$ is
\begin{equation}\label{eq Mellin Linnik}
 \mathbb{E}[ |\Lambda_{\a}|^s] = \frac{ \Gamma(1+s)\Gamma(1-s/\a) \Gamma(1+s/\alpha) }{\Gamma(1+s/2)\Gamma(1-s/2)} , \quad -1 < s < \a. 
\end{equation}
This yields a new identity in law: 
\begin{equation}
\label{new indentity in law}
     |\Lambda_\alpha|  \elaw \G_1 \times \Z_{\alpha/2}^{1/2}  \times \Z_{\alpha/2}^{-1/2},
\end{equation}
where $\Z_c, \, c \in (0,1),$ is a positive $c$-stable random variable, whose fractional moments are, see again \cite[Theorem 2.6.3]{Zol86}, 
\begin{equation}
    E[\Z_c^{s}] = \frac{\Gamma(1-s/c)}{\Gamma(1-s)}, \quad  s < c.
\end{equation}
Bosch \cite{Bos15} has studied the ratio of two independent positive $c$-stable random variables and obtained an explicit density, see Theorem 1.1 therein:
\begin{equation}
\label{eq Theorem in Bosch2015}
    \left(\frac{\Z_c}{\tilde{\Z}_c}\right)^c  \, \sim \, \frac{\sin(\pi c)}{\pi c (x^2 + 2\cos(\pi c)x + 1)}, \quad x >0, 
\end{equation}
where $\tilde{\Z}_c$ is an independent copy of $\Z_c$. Therefore, this lemma is a consequence of  \eqref{new indentity in law} and \eqref{eq Theorem in Bosch2015}.     
\end{proof}

\begin{proof}[Third proof of the sufficient part of Theorem \ref{Theorem ID of alpha Cauchy}]
    Combining Theorem \ref{Steutel 10.5} and Lemma \ref{lemma density of Linnik}, $\mathcal{C}_\a$ is infinitely divisible if  $1 < \a \leq 2$. 
\end{proof}

\section{Proof of the necessary part of Theorem \ref{Theorem ID of alpha Cauchy} }
\label{section necessary}
 We use the fact that an infinitely divisible characteristic function has no real zero to prove this part, see e.g. Lemma 7.5 in \cite{Sat99}. Let $\alpha > 2$. We suppose that $\mathcal{C}_\alpha$ is infinitely divisible. Its characteristic function $\varphi_\alpha(t)$ satisfies
\begin{equation}
\label{charcfunc}
    \varphi_\alpha(t) \propto \int_{-\infty}^\infty e^{itx}\frac{1}{1+|x|^\alpha}dx,
\end{equation}
where $ f \propto g $ means there exists a positive constant $c$ such that $f = cg$. 
Because $\varphi_\alpha(0) = 1$ and $\varphi_\alpha(t)$ has no real zero, we have $\varphi_\alpha(t) > 0$ for all $t \in \mathbb{R}$. We calculate the Laplace transform $L_\alpha(s)$ of $\varphi_\alpha(t){\bf 1}_{(0,\infty)}(t)$, for $s >0$, 
\begin{align}
\label{line1}
L_\alpha(s) &\propto \int_0^\infty e^{-st} \left(\int_{-\infty}^\infty e^{itx}\frac{1}{1+|x|^\alpha}dx\right)dt \\
     &\propto \text{Re}\left[  \int_0^\infty \left( \int_0^\infty e^{-(s-ix)t}dt\right)\frac{1}{1+x^\alpha}dx \right]  \\
     &\propto \text{Re}\left[  \int_0^\infty (s-ix)^{-1} \frac{1}{1+x^\alpha}dx\right] \\
      &\propto \int_0^\infty\frac{s}{s^2+x^2}\frac{1}{1+x^\alpha}dx\\
      \label{line5}
       &\propto \int_0^\infty\frac{1}{1+y^2}\frac{1}{1+(sy)^\alpha}dy
\end{align}
The second step is guaranteed by the dominated convergence theorem. 
In the third step we have used the identity
\begin{equation*}
    \int_0^\infty x^{\gamma-1}e^{-zx}dx = \Gamma(\gamma)z^{-\gamma}
\end{equation*}
for $\gamma > 0$ and $\text{Re}(z) > 0$. Let $s \downarrow 0$ in \eqref{line1} and \eqref{line5}, using the monotone 
convergence theorem, we have 
$$  \int_{-\infty}^\infty \varphi_\alpha(t) dt =2 \int_0^\infty \varphi_\alpha(t) dt < \infty. $$
Therefore, $\varphi_\alpha(t)$ is a density function on $\mathbb{R}$ (up to a multiplicative constant). \\
By Fourier inversion, from \eqref{charcfunc} we obtain
\begin{equation}
\label{FourierInversion}
 \phi_\alpha(x) :=   \frac{1}{1+|x|^\alpha}  \propto \int_{-\infty}^\infty e^{-itx}\varphi_\alpha(t)dt = \int_{-\infty}^\infty e^{itx}\varphi_\alpha(t)dt.
\end{equation}
It means that $\phi_\alpha$ is a characteristic function of a random variable $X$. We can check that $\phi_\alpha'(0) = \phi_\alpha''(0) = 0$, leading to $\mathbb{E}(X) = \mathbb{E}(X^2) = 0$, which is impossible. Therefore, the assumption that $\mathcal{C}_\alpha$ is infinitely divisible is wrong, we finish the proof.

\bigskip

\noindent
\textbf{Acknowledgements.} I am grateful to Profs. Thomas Simon and René Schilling for many insightful discussions and valuable comments. I would also like to thank Prof. Fuqing Gao for his valuable comments and suggestions.

%\vspace{3mm}
%\textbf{We state that there is no conflict of interest.}

\bibliographystyle{plain} 
\bibliography{MomentGammaML}
\end{document}